\theoremstyle{plain}
\newtheorem{theorem}{\bf Theorem}[section]
\theoremstyle{definition}
\newtheorem{example}[theorem]{\bf Example}
\newcommand{\eqa}[1]{
\begin{align*}
#1
\end{align*}}
\newcommand{\nai}[2]{\langle #1,#2\rangle}
  \newcommand{\subsubsubsection}{\@startsection{paragraph}{4}{\z@}%
    {1.0\Cvs \@plus.5\Cdp \@minus.2\Cdp}%
    {.1\Cvs \@plus.3\Cdp}%
    {\reset@font\sffamily\normalsize}
  }
\title{When does the Weyl--von Neumann Theorem hold?}
\author{Hiroshi Ando\footnote{HA is supported by JSPS KAKENHI 16K17608 and Grant for Basic Science Research Projects from The Sumitomo
Foundation}\and Yasumichi Matsuzawa\footnote{YM is supported by JSPS KAKENHI 6800055 and 26350231}}
\begin{document}

\maketitle
\begin{abstract} 
A famous theorem due to Weyl and von Neumann asserts that two bounded self-adjoint operators are unitarily equivalent modulo the compacts, if and only if their essential spectrum agree. The above theorem does not hold for unbounded operators. Nevertheless, there exist closed subsets $M$ of $\mathbb{R}$ on which the Weyl--von Neumann Theorem hold: all (not necessarily bounded) self-adjoint operators with essential spectrum $M$ are unitarily equivalent modulo the compacts. In this paper, we determine exactly which $M$ satisfies this property. 
\end{abstract}
\noindent
{\bf Keywords}. Weyl-von Neumann Theorem, Self-adjoint operators.\\
2010 Mathematics Subject Classification: 47B25
\section{Introduction and Main Theorem} 
Let $H$ be a separable infinite-dimensional complex Hilbert space, and let ${\rm{SA}}(H)$  (resp. $\mathbb{B}(H)_{\rm{sa}}$) be the set of all self-adjoint (resp. bounded self-adjoint) operators on $H$. Also let $\mathcal{U}(H)$ (resp. $\mathbb{K}(H)_{\rm{sa}}$) be the group of unitaries (resp. compact self-adjoint operators) on $H$. The essential spectrum of $A\in {\rm{SA}}(H)$ is denoted by $\sigma_{\rm{ess}}(A)$. 
The celebrated Weyl--von Neumann Theorem \cite{Weyl,Weyl10,von Neumann} asserts that operators $A,B\in \mathbb{B}(H)_{\rm{sa}}$ are unitarily equivalent modulo the compacts (which we call {\it Weyl--von Neumann equivalent}), that is, $uAu^*+K=B$ for some $u\in \mathcal{U}(H)$ and $K\in \mathbb{K}(H)_{\rm{sa}}$ if and only if $\sigma_{\rm{ess}}(A)=\sigma_{\rm{ess}}(B)$. This theorem has continued to play significant roles in many fields of analysis. On the other hand, for unbounded operators the Weyl--von Neumann Theorem does not hold (Weyl--von Neumann equivalent operators always have the same essential spectrum, but the converse need not hold). In fact, the Weyl--von Neumann equivalence, viewed as an equivalence relation on the Polish space ${\rm{SA}}(H)$ endowed with the strong resolvent topology is unclassifiable by countable structures, despite the fact that its restriction to $\mathbb{B}(H)_{\rm{sa}}$ is smooth \cite[Theorems 3.12 and 3.33]{AM15}. Somewhat unexpectedly, it is shown that the Weyl--von Neumann Theorem holds on $\mathbb{R}$ \cite[Theorem 3.17 (2)]{AM15}, i.e., any two operators $A,B\in {\rm{SA}}(H)$ with $\sigma_{\rm{ess}}(A)=\sigma_{\rm{ess}}(B)=\mathbb{R}$ are always Weyl--von Neumann equivalent, while it fails on $\emptyset$ or $\mathbb{N}$ \cite[Examples 3.3 and 3.5]{AM15}. It would be natural to ask on which closed subsets $M$ of $\mathbb{R}$ the Weyl--von Neumann Theorem holds. The answer to the question is the main result of the paper: 
\begin{theorem}
Let $M$ be a closed subset of $\mathbb{R}$. The following two conditions are equivalent.
\begin{list}{}{}
\item[{\rm{(i)}}] The Weyl-von Neumann Theorem holds on $M$. That is, 
$$\forall A,B\in {\rm{SA}}(H)\ [\sigma_{\rm{ess}}(A)=\sigma_{\rm{ess}}(B)=M\Rightarrow \exists u\in \mathcal{U}(H)\ \exists K\in \mathbb{K}(H)_{\rm{sa}}\ (uAu^*+K=B)].$$
\item[{\rm{(ii)}}] $M$ has no large holes at infinity. That is, $M\neq \emptyset$ and  
\[(*)\ \  d_M:=\lim_{n\to \infty}\min \left \{ \sup_{\lambda\in \mathbb{R}\setminus (M\cup [-n,n])}{\rm{dist}}(\lambda,M),1\right \}=0.\]
\end{list}
Here we assume $\sup \emptyset =0$. 
\end{theorem}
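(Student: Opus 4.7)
The plan is to prove the two implications separately, with most of the work going into the sufficiency of (ii). For the necessity direction I argue by contrapositive: assuming $d_M > 0$, the definition yields $\delta > 0$ and a sequence $(\lambda_k) \subset \mathbb{R}\setminus M$ with $|\lambda_k|\to\infty$ and $\mathrm{dist}(\lambda_k, M)\geq \delta$ for all $k$; after passing to a subsequence, assume $\lambda_k\to+\infty$. Pick any $B \in \mathrm{SA}(H)$ with $\sigma(B) = \sigma_{\mathrm{ess}}(B) = M$ (for instance a diagonal operator whose entries densely fill $M$), identify $H$ with a suitable direct sum, and set $A := B \oplus \mathrm{diag}(\lambda_k)$; still $\sigma_{\mathrm{ess}}(A) = M$. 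If $uAu^* + K = B$ held for some $u \in \mathcal{U}(H)$ and $K \in \mathbb{K}(H)_{\mathrm{sa}}$, letting $(e_k)$ be the orthonormal eigenvectors of $A$ for the $\lambda_k$'s and $v_k := u e_k$, one would have $(B-K)v_k = \lambda_k v_k$, whence $\|K v_k\| = \|(B-\lambda_k)v_k\| \geq \mathrm{dist}(\lambda_k, \sigma(B)) \geq \delta$ by the spectral theorem; but $(v_k)$ is orthonormal, so $v_k \to 0$ weakly, and compactness of $K$ forces $K v_k \to 0$: contradiction.

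For the sufficiency direction (ii) $\Rightarrow$ (i), I proceed in three steps. First, invoke the unbounded version of the Weyl--von Neumann theorem to write $A$ and $B$ as compact perturbations of diagonal operators, and then conjugate by a basis-matching unitary so that $A = \mathrm{diag}(a_n)$ and $B = \mathrm{diag}(b_n)$ in a common orthonormal basis. Second, I observe that $\mathrm{dist}(a_n, M) \to 0$: for any $\epsilon > 0$, only finitely many $a_n$ satisfy $\mathrm{dist}(a_n, M) \geq \epsilon$, because any such bounded $a_n$ would yield a subsequential limit outside the closed set $M$ (contradicting $\sigma_{\mathrm{ess}}(A) = M$), while those with $|a_n|$ large contradict $d_M = 0$. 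So I choose $a'_n \in M$ with $a'_n = a_n$ if $a_n \in M$ and $|a'_n - a_n| \leq 2\,\mathrm{dist}(a_n, M)$ otherwise; then $\mathrm{diag}(a_n) - \mathrm{diag}(a'_n)$ is a compact diagonal operator, and $\mathrm{diag}(a'_n)$ retains essential spectrum $M$ since the modifications preserve accumulation and multiplicity at every point of $M$. The same procedure applies to $B$.

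The main obstacle is the third step: showing that two diagonal operators $A' = \mathrm{diag}(a'_n)$, $B' = \mathrm{diag}(b'_n)$ with entries in $M$ and essential spectrum $M$ are Weyl--von Neumann equivalent. The key observation is that $\sigma_{\mathrm{ess}} = M$ forces every $\lambda \in M$ to receive infinitely many $a'$-entries and infinitely many $b'$-entries arbitrarily close to it---accumulation at non-isolated points of $M$, or infinite multiplicity at isolated ones. A back-and-forth construction then yields a bijection $\sigma \colon \mathbb{N} \to \mathbb{N}$ with $|a'_n - b'_{\sigma(n)}| \to 0$: at odd stage $k$, match the smallest $n \notin \mathrm{dom}(\sigma)$ to some $m \notin \mathrm{range}(\sigma)$ with $|a'_n - b'_m| < 1/k$; at even stage $k$, do the symmetric matching starting from $(b'_n)$. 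Both choices are possible because infinitely many candidates lie within $1/k$ and only finitely many have been used. The associated permutation unitary $P_\sigma$ satisfies $A' - P_\sigma B' P_\sigma^* = \mathrm{diag}(a'_n - b'_{\sigma(n)}) \in \mathbb{K}(H)_{\mathrm{sa}}$, giving $A \sim B$ modulo compacts.
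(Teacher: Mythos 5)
Your argument for (ii)$\Rightarrow$(i) is essentially the paper's: reduce to diagonal operators via the unbounded Weyl--von Neumann theorem, use $d_M=0$ together with $\sigma_{\rm{ess}}=M$ to show ${\rm{dist}}(a_n,M)\to 0$ (your dichotomy between bounded and escaping eigenvalues is exactly the paper's argument), and then produce a bijection matching the two eigenvalue sequences up to an error tending to $0$. The paper outsources the last step to Akhiezer--Glazman \S 94; you unpack it as an explicit back-and-forth, which is correct (the key point, that every $\lambda\in M$ attracts infinitely many entries of each sequence, is exactly what $\sigma_{\rm{ess}}=M$ gives for a diagonal operator).

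For (i)$\Rightarrow$(ii) you take a genuinely different and arguably cleaner route. The paper builds two explicit diagonal operators sharing a dense-in-$M$ part but with interleaved escaping eigenvalues $\lambda_k$ versus $\lambda_{k+1}-\frac{1}{4}d_M$, and gets the contradiction by estimating $\|K\eta_{\nai{k}{1}}\|$ coefficient by coefficient. You instead fix one $B$ with $\sigma(B)=\sigma_{\rm{ess}}(B)=M$, adjoin the escaping sequence to form $A=B\oplus{\rm{diag}}(\lambda_k)$, and invoke the spectral-theorem bound $\|(B-\lambda_k)v_k\|\ge {\rm{dist}}(\lambda_k,\sigma(B))\ge\delta$ for unit vectors $v_k\in\dom{B}$; this replaces the paper's case analysis on $|\lambda_{k'+1}-\frac14 d_M-\lambda_k|$ by one line and is perfectly valid (note $v_k=ue_k\in\dom{uAu^*}=\dom{B}$ since $K$ is bounded). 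The one genuine gap is the case $M=\emptyset$: the negation of (ii) is ``$M=\emptyset$ or $d_M>0$,'' and your construction requires a $B$ with $\sigma(B)=M$, which does not exist when $M=\emptyset$ (and ${\rm{dist}}(\lambda_k,M)$ is then meaningless). The paper handles this case by citing \cite[Examples 3.3 and 3.5]{AM15}. Your method does adapt --- e.g.\ take $B={\rm{diag}}(4^k)$ and $A={\rm{diag}}(2\cdot 4^k)$, so that $\|Kv_k\|=\|(B-2\cdot 4^k)v_k\|\ge {\rm{dist}}(2\cdot 4^k,\sigma(B))=4^k$ --- but as written the case is simply not covered and must be added.
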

\begin{proof}
(ii)$\Rightarrow $(i): Assume that $M$ has no large holes at infinity, and let $A,B\in {\rm{SA}}(H)$ be such that $\sigma_{\rm{ess}}(A)=\sigma_{\rm{ess}}(B)=M$. 
By the Weyl's compact perturbation Theorem, we may assume that $A$ and $B$ are both diagonal with eigenvalues $\{\lambda_n\}_{n=1}^{\infty}, \{\mu_n\}_{n=1}^{\infty}\subset \mathbb{R}$, and moreover that all eigenvalues are of simple multiplicity. This implies that the sets of accumulation points of $\{\lambda_n\}_{n=1}^{\infty}$ and $\{\mu_n\}_{n=1}^{\infty}$ both equal $M$. 
Following von Neumann's proof (see \cite[$\S$94]{AkhiezerGlazman}),  let 
$$a_n:=\inf_{t\in M}|\lambda_n-t|, b_n:=\inf_{t\in M}|\mu_n-t|\ \ \ (n\in \mathbb{N}).$$
Then we show that $\lim_{n\to \infty}a_n=\lim_{n\to \infty}b_n=0.$ Assume by contradiction that $a_n$ does not converge to 0 as $n\to \infty$. 
Then there exist $0<\delta<1$ and a subsequence $(a_{n_k})_{k=1}^{\infty}$ such that $a_{n_k}\ge \delta\ (k\in \mathbb{N})$ holds. We first observe that for a fixed $N\in \mathbb{N}$, there exist only finitely many $k\in \mathbb{N}$ for which $|\lambda_{n_k}|\le N$. Indeed, assume by contradiction that there exists a subsequence $(n_k')_{k=1}^{\infty}$ of $(n_k)_{k=1}^{\infty}$ for which $|\lambda_{n_k'}|\le N\ (k\in \mathbb{N})$. Then $(\lambda_{n_k'})_{k=1}^{\infty}$ must have an accumulation point, say $\lambda$ with $|\lambda|\le N$. Then $\lambda\in M$ and there exists $k_0\in \mathbb{N}$ such that 
$\delta\le a_{n_{k_0}'}\le |\lambda_{n_{k_0}'}-\lambda|<\delta$,\,
which is a contradiction. Therefore by taking further subsequence of $(a_{n_k})_{k=1}^{\infty}$ if necessary, we may assume that $|\lambda_{n_k}|>k$ for every $k\in \mathbb{N}$. Then $\lambda_{n_k}\in \mathbb{R}\setminus (M\cup [-k,k])$. Let $0<\varepsilon<\delta(<1)$. By $(*)$, there exists $k_0\in \mathbb{N}$ such that 
for every $k\ge k_0$, and $\lambda \in \mathbb{R}\setminus (M\cup [-k,k]), \text{dist}(\lambda,M)<\varepsilon$ holds. 
This shows that $a_{n_{k_0}}=\text{dist}(\lambda_{n_{k_0}},M)<\delta$, a contradiction. Therefore $\lim_{n\to \infty}a_n=0$. Similarly, $\lim_{n\to \infty}b_n=0$ holds.
Then as in \cite[$\S$94]{AkhiezerGlazman}, there exists a bijection $\pi\colon \mathbb{N}\to \mathbb{N}$ such that $\lim_{n\to \infty}|\lambda_{\pi(n)}-\mu_n|=0$. By (the proof of) \cite[$\S94$, Theorem 3]{AkhiezerGlazman}, this shows that $A,B$ are Weyl-von Neumann equivalent.\\ \\
(i)$\Rightarrow $(ii): We show the contrapositive. Assume that (ii) does not hold. If $M=\emptyset$, then the Weyl-von Neumann Theorem does not hold on $M$ (what is much worse, the Weyl-von Neumann  equivalence relation restricted to $\{A\in {\rm{SA}}(H);\ \sigma_{\rm{ess}}(A)=\emptyset\}$ is still unclassifiable by countable structures \cite[Theorem 3.32]{AM15}). Hence we may assume that $M\neq \emptyset$ and $(1\ge )d_M>0$. Note that this in particular means that for each $n\in \mathbb{N}$, $\mathbb{R}\setminus (M\cup [-n,n])\neq \emptyset$ holds. Then at least one of $(0,\infty)\setminus M$ or $(-\infty,0)\setminus M$ is unbounded. 
We may therefore assume that $(0,\infty)\setminus M$ is unbounded, so that there exist numbers $1<\lambda_1<\lambda_2<\cdots$ in $\mathbb{R}\setminus M$ such that 
$\text{dist}(\lambda_n,M)>\frac{1}{2}d_M$ and $\lambda_{n+1}>2\lambda_n$ for every $n\in \mathbb{N}$. 
Since $M\neq \emptyset$, let $\{\mu_n\}_{n=1}^{\infty}$ be a countable dense subset of $M$ (it is possible that some $\mu_n$ and $\mu_m$ are equal for different $n,m$). Fix a bijection $\nai{\ \cdot\ }{\ \cdot\ }\colon \mathbb{N}^2\to \mathbb{N}$ given by $\nai{k}{m}=2^{k-1}(2m-1)\ (k,m\in \mathbb{N})$. Fix an orthonormal basis $\{\xi_n\}_{n=1}^{\infty}$ for $H$ and let $e_n$ be the orthogonal projection of $H$ onto $\mathbb{C}\xi_n\ (n\in \mathbb{N})$. Define $A,B\in {\rm{SA}}(H)$ by 
\begin{align}
A&:=\sum_{k=1}^{\infty}\lambda_ke_{\nai{k}{1}}+\sum_{k=1}^{\infty}\sum_{m=2}^{\infty}\mu_ke_{\nai{k}{m}},\\
B&:=\sum_{k=1}^{\infty}(\lambda_{k+1}-\tfrac{1}{4}d_M)e_{\nai{k}{1}}+\sum_{k=1}^{\infty}\sum_{m=2}^{\infty}\mu_ke_{\nai{k}{m}}.
\end{align}
Then 
$$\sigma_{\rm{ess}}(A)=\overline{\{\mu_n;n\in \mathbb{N}\}}=M=\sigma_{\rm{ess}}(B).$$  
We show that $A$ and $B$ are not Weyl-von Neumann equivalent. 
Assume by contradiction that there exist  $u\in \mathcal{U}(H)$ and $K\in \mathbb{K}(H)_{\rm{sa}}$ such that $uAu^*+K=B$ holds. Let $\eta_n:=u\xi_n\ (n\in \mathbb{N})$ and let $f_n=ue_nu^*$, the orthogonal projection of $H$ onto $\mathbb{C}\eta_n$. Then for each $k\in \mathbb{N}$, we have $uAu^*\eta_{\nai{k}{1}}+K\eta_{\nai{k}{1}}=B\eta_{\nai{k}{1}}$, so that 
\begin{equation}
K\eta_{\nai{k}{1}}=\sum_{k'=1}^{\infty}(\lambda_{k'+1}-\tfrac{1}{4}d_M-\lambda_k)\nai{\xi_{\nai{k'}{1}}}{\eta_{\nai{k}{1}}}\xi_{\nai{k'}{1}}+\sum_{k'=1}^{\infty}\sum_{m=2}^{\infty}(\mu_{k'}-\lambda_k)\nai{\xi_{\nai{k'}{m}}}{\eta_{\nai{k}{1}}}\xi_{\nai{k'}{m}}.
\end{equation}
Since $\nai{k}{1}\stackrel{k\to \infty}{\to}\infty$, we have $\eta_{\nai{k}{1}}\stackrel{k\to \infty}{\to}0$ weakly in $H$. Since $K$ is compact, this shows that $\|K\eta_{\nai{k}{1}}\|\stackrel{k\to \infty}{\to}0$. If $k=k'+1$, then $|\lambda_{k'+1}-\frac{1}{4}d_M-\lambda_k|=\frac{1}{4}d_M$. 
If $k<k'+1$, then 
$$\lambda_{k'+1}-\tfrac{1}{4}d_M-\lambda_k>2\lambda_k-\tfrac{1}{4}d_M-\lambda_k=\lambda_k-\tfrac{1}{4}d_M>\tfrac{1}{4}d_M.$$
If $k>k'+1$, then 
$$|\lambda_{k'+1}-\tfrac{1}{4}d_M-\lambda_k|>\lambda_{k'+1}+\tfrac{1}{4}d_M>\tfrac{1}{4}d_M.$$
This shows that in any case 
$$|\lambda_{k'+1}-\tfrac{1}{4}d_M-\lambda_k|\ge \tfrac{1}{4}d_M\ \ \ \ (k\in \mathbb{N}).$$
Also, for every $k'\in \mathbb{N}$, 
$$|\lambda_k-\mu_{k'}|\ge \text{dist}(\lambda_k,M)>\tfrac{1}{2}d_M>\tfrac{1}{4}d_M.$$
 Therefore for every $k\in \mathbb{N}$, we have  
\eqa{
\|K\eta_{\nai{k}{1}}\|^2&=
\sum_{k'=1}^{\infty}|\lambda_{k'+1}-\tfrac{1}{4}d_M-\lambda_k|^2|\nai{\xi_{\nai{k'}{1}}}{\eta_{\nai{k}{1}}}|^2+
\sum_{k'=1}^{\infty}\sum_{m=2}^{\infty}|\mu_{k'}-\lambda_k|^2|\nai{\xi_{\nai{k'}{m}}}{\eta_{\nai{k}{1}}}|^2\\
&\ge \sum_{k'=1}^{\infty}(\tfrac{1}{4}d_M)^2|\nai{\xi_{\nai{k'}{1}}}{\eta_{\nai{k}{1}}}|^2+
\sum_{k'=1}^{\infty}\sum_{m=2}^{\infty}(\tfrac{1}{4}d_M)^2|\nai{\xi_{\nai{k'}{m}}}{\eta_{\nai{k}{1}}}|^2\\
&=(\tfrac{1}{4}d_M)^2.
}  
This contradicts $\|K\eta_{\nai{k}{1}}\|\stackrel{k\to \infty}{\to}0$. Thus $A,B$ are not Weyl-von Neumann equivalent. 
\end{proof}
\begin{example}
Here are examples of closed sets $M\subset \mathbb{R}$ having no large halls at infinity. 
\begin{list}{}{}
\item[(a)] $M=\mathbb{R}$, or more generally $M=\mathbb{R}\setminus U$ with $U$ bounded and open. 
\item[(b)] $M=\mathbb{R}\setminus \bigcup_{n\in \mathbb{N}}(n-r_n,n+r_n)$, where $r_n>0\ (n\in \mathbb{N})$ and $\displaystyle \lim_{n\to \infty}r_n=0$.
\end{list}
\end{example}

Hiroshi Ando\\
Department of Mathematics and Informatics,\\
Chiba University\\
1-33 Yayoi-cho, Inage, Chiba, 263-8522
Japan\\
hiroando@math.s.chiba-u.ac.jp\\
\\
Yasumichi Matsuzawa\\
Department of Mathematics, Faculty of Education, Shinshu University\\
6-Ro, Nishi-nagano, Nagano, 380-8544, Japan\\
myasu@shinshu-u.ac.jp\\
\end{document}